\newtheorem{satz}{Theorem}
\newtheorem{proposition}[satz]{Proposition}
\newtheorem{theorem}[satz]{Theorem}
\newtheorem{lemma}[satz]{Lemma}
\newtheorem{corollary}[satz]{Corollary}
\newtheorem{remark}[satz]{Remark}
\def\eps{\varepsilon}
\def\_phi{\varphi}
\def\F{{\mathbb F}}
\def\C{{\mathbb C}}
\def\R{{\mathbb R}}
\def\E{\mathsf {E}}
\def\T{{\mathbb T}}
\def\Z_N{{\mathbb Z}_N}
\def\Z{{\mathbb Z}}
\def\f{{\mathbb F}}
\def\D{{\mathbb D}}
\def\G{\Gamma}
\def\D{\Delta}
\def\T{\mathsf {T}}
\author{Shkredov I.D.}
\title{ On tripling constant of multiplicative subgroups
\footnote{
This work was supported by grant
Russian Scientific Foundation RSF 14--11--00433.}
}
\date{}
\begin{document}
\maketitle

\begin{center}
 Annotation.
\end{center}

{\it \small
    We prove that any multiplicative subgroup $\G$ of the prime field $\f_p$
    with $|\G| < \sqrt{p}$ satisfies $|3\G| \gg \frac{|\G|^2}{\log |\G|}$.
    Also, we obtain a bound for the multiplicative energy of any nonzero shift
    of $\G$, namely $\E^{\times} (\G+x) \ll |\G|^{2} \log |\G|$, where $x\neq 0$ is an arbitrary.
}
\\

\section{Introduction}
\label{sec:introduction}

Let $p$ be a prime number, $\F_p$ be the finite field, and $\F^*_p = \F_p \setminus \{0\}$.
Let also $\G \subseteq \F^*_p$ be an arbitrary  multiplicative subgroup.
Such subgroups were studied by various authors (see the references in \cite{KS1}).
One of the interesting question is the determination of the {\it additive} structure of multiplicative subgroups,
see e.g. \cite{Waring_Z_p,Waring_Z_p_new,Glibichuk_zam,Hart_A+A_subgroups,ss,Sh_ineq,Sh_average,sv}.
In particular, what can we say about the size of sumsets of subgroups, that is
about the sets of the form
$$
    2\G = \G + \G := \{ \gamma_1+\gamma_2 ~:~ \gamma_1, \gamma_2 \in \G \}
    \,?
$$

There is a well--known  conjecture that the sumset $2\G$ contains $\F^*_p$, provided that $|\G| > p^{1/2+\eps}$,
where $\eps>0$ is any number and $p\ge p(\eps)$ is large enough.
In the article we study a bigger set $3\G = \G+\G+\G$ instead of $2\G$.
Let us formulate the main result of the paper.

\begin{theorem}
    Let $p$ be a prime number, $\G \subset \f_p^*$ be a multiplicative subgroup, $|\G| < \sqrt{p}$.
    Then
$$
    |3\G| \gg \frac{|\G|^2}{\log |\G|} \,.
$$
\label{t:intr_1}
\end{theorem}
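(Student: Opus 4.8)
I would reduce the theorem to an upper bound for the sixth additive moment
$$
\mathsf T(\G)\ :=\ \#\{(\gamma_1,\dots,\gamma_6)\in\G^6\ :\ \gamma_1+\gamma_2+\gamma_3=\gamma_4+\gamma_5+\gamma_6\}\,.
$$
Writing $r(\la)=\#\{(\gamma_1,\gamma_2,\gamma_3)\in\G^3:\gamma_1+\gamma_2+\gamma_3=\la\}$ one has $\sum_\la r(\la)=|\G|^3$ and $\sum_\la r(\la)^2=\mathsf T(\G)$, so by Cauchy--Schwarz $|3\G|\ge(\sum_\la r(\la))^2/\sum_\la r(\la)^2=|\G|^6/\mathsf T(\G)$, and the theorem follows from the estimate $\mathsf T(\G)\ll|\G|^4\log|\G|$. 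The difficulty is that $\mathsf T(\G)$ is rigid — one has $\mathsf T(\G)=\frac{1}{p}\sum_\xi|\widehat{1_\G}(\xi)|^6$, and every elementary rearrangement of the defining equation merely returns $\mathsf T(\G)$ to itself, while the known fourth-moment bound $\E^{+}(\G)\ll|\G|^{5/2}$ (valid since $|\G|<\sqrt p$) does not by itself give the sixth-moment estimate. So a genuinely arithmetic ingredient is unavoidable.

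To set things up I would use the dilation invariance $t\G=\G$ $(t\in\G)$. Dividing the equation in $r(\la)$ by $\gamma_3$ and letting $\gamma_3$ run over $\G$ shows that, for $\la\ne0$, $r(\la)=M(\la\G)$, where $M(u):=\#\{(a,b)\in\G^2:a+b+1\in u\}$ depends only on the coset $u$ of $\G$ in $\f_p^*$, while $r(0)=|\G|\cdot r_{2\G}(-1)$ with $r_{2\G}(v):=\#\{(a,b)\in\G^2:a+b=v\}\le|\G|$ (and $r_{2\G}$ is itself constant on cosets). Collecting terms by cosets,
$$
\mathsf T(\G)\ =\ |\G|^2\,r_{2\G}(-1)^2\ +\ |\G|\sum_u M(u)^2\,,
$$
so the goal becomes $\sum_u M(u)^2\ll|\G|^3\log|\G|$. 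The key point is that $\sum_u M(u)^2$ counts quadruples $(a,b,a',b')$ with $(a+b+1)/(a'+b'+1)\in\G$: it measures the \emph{multiplicative} structure of the \emph{additively defined} shifted set $2\G+1$, which is precisely what the announced bound on the multiplicative energy of shifts is built to control.

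The main step would then be to bring in $\E^{\times}(\G+x)\ll|\G|^2\log|\G|$ for $x\ne0$. Since $2\G$ is dilation invariant it is a union of at most $|2\G|/|\G|\le|\G|$ full cosets $c\G$ $(c\in C)$, so $2\G+1$ is covered by the $\le|\G|$ shifted cosets $c\G+1=c\cdot(\G+c^{-1})$, each a dilate of a nonzero translate of $\G$. Inserting this into $\sum_u M(u)^2$, with the coset-constant weights $r_{2\G}$, the diagonal contribution of a piece $c\G+1$ is $\le r_{2\G}(c)^2\cdot|\G|^{1/2}\E^{\times}(\G+c^{-1})^{1/2}$ (Cauchy--Schwarz, using $\E^{\times}(A)=\sum_\nu r_{A/A}(\nu)^2$); summing over $c$ and using $\sum_c|\G|\,r_{2\G}(c)^2\le\E^{+}(\G)\ll|\G|^{5/2}$ bounds the diagonal by $\ll|\G|^3(\log|\G|)^{1/2}$. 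The off-diagonal cross terms, $c\ne c'$, count solutions of $(c\gamma_1+1)/(c'\gamma_2+1)\in\G$ and would be handled by Cauchy--Schwarz back to the diagonal together with the same shift-energy input; a dyadic decomposition of the level sets of $M$ (and of the relevant representation functions) ensures only one logarithm is lost overall.

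The principal obstacle, as foreshadowed, is to prove the shift-energy estimate $\E^{\times}(\G+x)\ll|\G|^2\log|\G|$ itself. No combinatorial identity yields it, so I expect it to come from Stepanov's polynomial method — constructing an auxiliary polynomial that vanishes to high order along the solution variety and playing its degree against the number of solutions — which is also where the logarithmic factor arises (through a dyadic/choice-of-parameters step). A secondary difficulty is the bookkeeping in the coset decomposition of $2\G+1$: because it is a union of $\asymp|\G|$ translates of $\G$ rather than a single one, the gain per piece ($|\G|^2\log|\G|$ against the trivial $|\G|^3$) is exactly what is needed to reach $|\G|^3\log|\G|$ after summation, so the off-diagonal cross-correlations must be shown to be $\ll|\G|$ up to logarithms uniformly in $c\ne c'$ — itself a genericity statement of Stepanov type — leaving essentially no room to spare.
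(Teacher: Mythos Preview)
Your route via the sixth additive moment $\mathsf T(\G)$ is different from the paper's, and the outline has a real gap in the off--diagonal step.

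The paper does \emph{not} bound $\mathsf T(\G)$ at all. It observes the one--line inclusion
\[
(\G+\gamma_1)(\G+\gamma_2)\;=\;\G\cdot\G+\gamma_1\G+\gamma_2\G+\gamma_1\gamma_2\;\subseteq\;3\G+\gamma_1\gamma_2
\]
(for $\gamma_1,\gamma_2\in\G$, since $\G\cdot\G=\G$ and $\gamma_i\G=\G$), and then applies Cauchy--Schwarz in the form $|(\G+\gamma_1)(\G+\gamma_2)|\cdot\E^{\times}(\G+\gamma_1,\G+\gamma_2)\ge|\G|^4$ together with the energy bound $\E^{\times}(\G+\gamma_1,\G+\gamma_2)\ll|\G|^2\log|\G|$. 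That already gives $|3\G|\gg|\G|^2/\log|\G|$, with no sixth moment and no decomposition of $2\G+1$.

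Your reduction to $\sum_u M(u)^2\ll|\G|^3\log|\G|$ is correct, and your diagonal estimate is fine. The gap is the off--diagonal. Writing $N(c,c')=\sum_u|u\cap(c\G+1)|\,|u\cap(c'\G+1)|$, ``Cauchy--Schwarz back to the diagonal'' only gives $N(c,c')\ll|\G|^{3/2}(\log|\G|)^{1/2}$, and since $\sum_{c\ne c'}r_{2\G}(c)r_{2\G}(c')\asymp(\sum_c r_{2\G}(c))^2\asymp|\G|^2$, the off--diagonal contribution comes out as $|\G|^{7/2}(\log|\G|)^{1/2}$, a full factor $|\G|^{1/2}$ too large. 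To close the argument you would need the uniform bound $N(c,c')\ll|\G|\log|\G|$ for $c\G\ne c'\G$; unwinding, $N(c,c')$ counts triples $(\gamma_1,\gamma_2,g)\in\G^3$ with $c\gamma_1+1=g(c'\gamma_2+1)$, and this does \emph{not} follow from the shift--energy input $\E^{\times}(\G+x)\ll|\G|^2\log|\G|$ (Cauchy--Schwarz from that energy only recovers the same $|\G|^{3/2}$). It is a separate Stepanov--type statement about a one--parameter family of lines through $\G\times\G$ that you have asserted but not supplied. In effect, your plan trades the theorem for a statement of at least equal difficulty.

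So the missing idea is the product--set inclusion above: it lets a \emph{single} application of the shift--energy bound finish the proof, whereas your decomposition of $2\G+1$ into $\asymp|\G|$ translates forces you to control $\asymp|\G|^2$ cross terms to within a factor that the available tools do not deliver.
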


It is interesting to compare Theorem \ref{t:intr_1} with a
result of
A.A. Glibichuk who
obtained
in \cite{Glibichuk_zam} that $|4\G| > p/2$ provided $|\G| > \sqrt{p}$
as well as with a result from \cite{Sh_average} :
$\f_p^* \subseteq 5\G$ if $-1\in \G$ and $|\G| \gg \sqrt{p} \log^{1/3} p$.

Let us say a few words about the proof.
In \cite{R_Minkovski} O. Roche--Newton
obtained
that for any set $A$ from $\R$ there are $a,b\in A$ such that
\begin{equation}\label{f:pinned}
    |(A+a) (A+b)| \gg \frac{|A|^2}{\log |A|} \,.
\end{equation}
More precisely, it was proved in \cite{R_Minkovski} that the common  multiplicative energy
(see the definition in the next section \ref{sec:definitions}) of $A+a$ and $A+b$ is small
\begin{equation}\label{f:pinned'}
    \E^{\times} (A+a,A+b) \ll |A|^{2} \log |A| \,.
\end{equation}
The proof used the Szemer\'{e}di--Trotter theorem from the incidence geometry.
Roche--Newton calculated the number of collinear triples in the Cartesian product $A \times A$
in two different ways and comparing the estimates gives (\ref{f:pinned'}).
In our arguments we use Stepanov's method \cite{Stepanov} in form of Mit'kin~\cite{Mitkin}
(see also \cite{K_Tula} and \cite{KS1}) which allows us to get (\ref{f:pinned}), (\ref{f:pinned'})
for $A$
be
any multiplicative subgroup of size less than $\sqrt{p}$.
It is easy to see that such an analog of (\ref{f:pinned}) implies Theorem \ref{t:intr_1}.
Note also that in the case of multiplicative subgroup $A$ bound (\ref{f:pinned'}) is equivalent to
$$
    \E^{\times} (A+1) \ll |A|^{2} \log |A|
$$
because of $A+a = a(A+1)$, $A+b = b(A+1)$, $a,b\in A$.
Thus, the method allows us to obtain a good upper  bound for the multiplicative energy of $A+1$
(and actually of any shift $A+x$, $x\in \F^*_p$ see Theorem \ref{t:E_GPi} of section \ref{sec:proof}).


\section{Notation}
\label{sec:definitions}


Let $f,g : \f_p \to \C$ be two functions.
Put
\begin{equation}\label{f:convolutions}
    (f*g) (x) := \sum_{y\in \f_p} f(y) g(x-y) \quad \mbox{ and } \quad
        (f\circ g) (x) := \sum_{y\in \f_p} f(y) g(y+x)
\end{equation}
Replacing $+$ by the multiplication, one can define the {\it multiplicative convolution}
of two functions $f$ and $g$.
Write $\E^{+}(A,B)$ for the {\it additive energy} of two sets $A,B \subseteq \f_p$
(see e.g. \cite{TV}), that is
$$
    \E^{+} (A,B) = |\{ a_1+b_1 = a_2+b_2 ~:~ a_1,a_2 \in A,\, b_1,b_2 \in B \}| \,.
$$
If $A=B$ we simply write $\E^{+} (A)$ instead of $\E^{+} (A,A).$
Clearly,
\begin{equation}\label{f:energy_convolution}
    \E^{+} (A,B) = \sum_x (A*B) (x)^2 = \sum_x (A \circ B) (x)^2 = \sum_x (A \circ A) (x) (B \circ B) (x)
    \,.
\end{equation}
By $|S|$ denote the cardinality of a set $S \subseteq \f_p$.
Note
that
$$
    \E^{+} (A,B) \le \min \{ |A|^2 |B|, |B|^2 |A|, |A|^{3/2} |B|^{3/2} \} \,.
$$
In the same way define the {\it multiplicative energy} of two sets $A,B \subseteq \f_p$
$$
    \E^{\times} (A,B) = |\{ a_1 b_1 = a_2 b_2 ~:~ a_1,a_2 \in A,\, b_1,b_2 \in B \}| \,.
$$
Certainly, multiplicative energy $\E^{\times} (A,B)$ can be expressed in terms of multiplicative convolutions,
similar to (\ref{f:energy_convolution}).

Let $\G \subseteq \f_p^*$ be a multiplicative subgroup.
A set $Q\subseteq \f_p^*$ is called {\it $\G$--invariant} if $Q\G = Q$.
All logarithms are base $2.$ Signs $\ll$ and $\gg$ are the usual Vinogradov's symbols.

\section{On sumsets of multiplicative subgroups}
\label{sec:ms}

In the section we have deal with the quantity ($\T$ for {\it collinear triples})
$$
    \T (A,B,C,D) := \sum_{c \in C,\, d\in D} \E^\times (A-c,B-b) \,.
$$
Because $\E^\times (A-c,B-b) \ge |A| |B|$ it follows that $\T (A,B,C,D) \ge |A||B||C||D|$.
It turns out that there is the same upper bound for $\T$ up to logarithmic factors
in the case of $A,B,C,D$ equal some cosets of a multiplicative subgroup.
The proof based on the following lemma of Mit'kin~\cite{Mitkin}, see also \cite{SSV}.


\begin{lemma}
\label{l:mitkin}
Let $p>2$ be a prime number, $\Gamma,\Pi$ be subgroups of $\mathbb{F}_p^*,$
$M_\Gamma,M_\Pi$ be sets of distinct coset representatives of $\Gamma$ and $\Pi$, respectively.
For an arbitrary set $\Theta \subset M_\Gamma \times M_\Pi$ such that $(|\Gamma||\Pi|)^2|\Theta| < p^3$
and $|\Theta| \le 33^{-3}|\Gamma||\Pi|$, we have
\begin{equation}\label{eq:mitkin}
\sum_{(u,v) \in \Theta}\Bigl|\{(x,y) \in \Gamma \times \Pi : ux+vy=1\}\Bigr| \ll (|\Gamma||\Pi||\Theta|^2)^{1/3}.
\end{equation}
\end{lemma}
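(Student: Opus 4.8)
The plan is to prove Lemma~\ref{l:mitkin} by Stepanov's polynomial method, following Mit'kin's construction. The overall strategy is to assume for contradiction that the left-hand side of \eqref{eq:mitkin} is large, and then build an auxiliary polynomial that vanishes to high order at many points, while having degree too small to accommodate that many zeros. Concretely, for each pair $(u,v)\in\Theta$ the solutions $(x,y)\in\Gamma\times\Pi$ to $ux+vy=1$ parametrise a family; summing over $\Theta$ we get a set $S$ of triples (or of points in a suitable product) whose size is exactly the quantity we wish to bound. I would fix notation $g=|\Gamma|$, $h=|\Pi|$, $t=|\Theta|$ and write $N$ for the left side of \eqref{eq:mitkin}, aiming to show $N\ll (ght^2)^{1/3}$.

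First I would set up the Stepanov polynomial. Using that $\Gamma$ consists of the roots of $X^{g}-1$ and $\Pi$ of $Y^{h}-1$ in $\F_p$, one seeks a nonzero polynomial $F(X,Y)$ of the form
$$
    F(X,Y) = \sum_{i=0}^{d_1-1}\sum_{j=0}^{d_2-1} c_{ij}\, X^{i} Y^{j} \cdot \big(\text{low-degree correction terms in } u,v\big),
$$
designed so that for every $(u,v)\in\Theta$ the one-variable specialisation vanishes to order $\ge m$ at each common solution, for a suitably chosen multiplicity $m$. The number of free coefficients $c_{ij}$ is about $d_1 d_2$, and the vanishing conditions impose roughly $m t$ linear constraints; choosing $d_1 d_2 > m t$ guarantees a nonzero $F$ by linear algebra. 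The hypotheses $(gh)^2 t < p^3$ and $t \le 33^{-3} gh$ are exactly what is needed to keep all the exponents below $p$ (so that the reductions $X^g\equiv 1$, $Y^h\equiv 1$ do not collapse degrees uncontrollably) and to make the counting of constraints versus unknowns come out in the right direction; I would track these inequalities carefully at each step since getting the constants right is the fiddly part.

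Next, one shows $F$ is not identically zero when restricted to the relevant variety but still has controlled degree, say $\deg_X F < D_1$ and $\deg_Y F < D_2$ with $D_1 D_2$ of order $ght/m$ or so after the modular reductions. Since $F$ vanishes with multiplicity $m$ at the $N$ points coming from $\Theta$ (here one uses the Frobenius/Hasse-derivative trick to transfer high-order vanishing of the specialised one-variable polynomials into high-order vanishing of a single two-variable $F$, or alternatively a resultant argument eliminating one variable), a Bézout- or B\'ezout-type degree count gives $mN \le D_1 D_2 \ll ght/m$, hence $N \ll ght/m^2$. Finally, optimising the multiplicity by taking $m \asymp (gh/t)^{1/3}$ balances the two competing constraints ($d_1 d_2 > mt$ for existence, $mN \le D_1 D_2$ for the contradiction) and yields $N \ll (g h t^2)^{1/3}$, which is \eqref{eq:mitkin}.

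The main obstacle I expect is the precise construction of the Stepanov polynomial $F$ and the verification that it is genuinely nonzero after the substitutions $X^g\mapsto 1$, $Y^h\mapsto 1$ — this non-vanishing step is always the delicate heart of Stepanov-type arguments, and it is where the specific shape of Mit'kin's construction (and the role of the coset representatives $M_\Gamma, M_\Pi$) really matters. Keeping the degree bounds $D_1, D_2$ strictly below $p$ while simultaneously ensuring $D_1 D_2$ stays as small as $ght/m$ requires the two size hypotheses on $|\Theta|$ in an essential way, so I would organise the proof so that these are invoked transparently rather than buried in an arithmetic estimate. Since the statement cites \cite{Mitkin} and \cite{SSV}, I would present this as following their construction, spelling out only the counting that leads to the exponent $1/3$ and the final optimisation of $m$.
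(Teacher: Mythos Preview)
The paper does not prove Lemma~\ref{l:mitkin} at all: it is quoted as a known result of Mit'kin~\cite{Mitkin} (see also \cite{SSV}) and immediately applied to prove Proposition~\ref{p:sigma}. So there is no ``paper's own proof'' to compare your proposal against.

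That said, your outline is in the right spirit --- Stepanov's polynomial method is indeed how Mit'kin proves this bound, and you have correctly identified the roles of the two hypotheses: $(|\Gamma||\Pi|)^2|\Theta|<p^3$ keeps the relevant degrees below $p$ so that the Hasse-derivative machinery works, and $|\Theta|\le 33^{-3}|\Gamma||\Pi|$ makes the parameter count (coefficients versus vanishing conditions) come out the right way. The optimisation $m\asymp (gh/t)^{1/3}$ leading to the exponent $1/3$ is also correct. What you have written, however, is a strategy sketch rather than a proof: the actual construction of the auxiliary polynomial, the precise form of the derivative/vanishing conditions, and above all the non-vanishing argument are not carried out. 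In particular, the Mit'kin argument works with a \emph{one-variable} auxiliary polynomial (substituting $y=(1-ux)/v$ and reducing modulo $x^g-1$ and the corresponding relation for $y^h$), not a genuinely two-variable B\'ezout count; your description of ``$\deg_X F < D_1$, $\deg_Y F < D_2$'' and a B\'ezout-type bound is not quite how the argument runs. If you want to include a proof here rather than just a citation, you would need to follow \cite{Mitkin} or \cite{SSV} and write out the polynomial explicitly.
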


Using the above lemma,
we prove the main technical result of the section.
The proof is in spirit of \cite{R_Minkovski}.

\begin{proposition}
Let $p$ be a prime  number, $\Gamma,\Pi$ be subgroups of $\mathbb{F}_p^*$.
Suppose that $|\Gamma||\Pi| < p$.
Then
\begin{equation}\label{f:sigma}
    \sum_{\gamma \in \G,\, \pi\in \Pi} \E^\times (\G-\gamma,\Pi-\pi)
        \ll
            |\G|^2 |\Pi|^2 \log (\min\{ |\G|, |\Pi| \}) + |\G| |\Pi| (|\G|^2 + |\Pi|^2) \,.
\end{equation}
\label{p:sigma}
\end{proposition}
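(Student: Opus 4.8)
The plan is to count collinear triples in the Cartesian product $\G \times \Pi$ in two different ways, following the Roche--Newton approach but replacing the Szemer\'edi--Trotter incidence bound with Mit'kin's Lemma~\ref{l:mitkin}. First observe that the left side of (\ref{f:sigma}), which is $\T(\G,\Pi,\G,\Pi)$ in the notation above, counts quadruples: for each $(\gamma,\pi)$ and each solution of $(g_1-\gamma)(p_1-\pi) = (g_2-\gamma)(p_2-\pi)$ with $g_i \in \G$, $p_i \in \Pi$. Geometrically, $\sum_{\gamma,\pi} \E^\times(\G-\gamma, \Pi-\pi)$ is (up to lower-order diagonal terms) the number of \emph{collinear triples} of points in $\G\times\Pi$: three points $(a_1,b_1),(a_2,b_2),(a_3,b_3)$ lie on a common line iff the obvious $2\times 2$ determinant vanishes, and expanding this condition recovers the shifted-product equation after setting one point to be $(\gamma,\pi)$. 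So the first step is to make precise the identity
$$
    \T(\G,\Pi,\G,\Pi) = \#\{\text{collinear triples in } \G\times\Pi\} + O(|\G|^2|\Pi| + |\G||\Pi|^2)\,,
$$
the error term accounting for degenerate configurations (repeated points, vertical/horizontal lines).

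Next I would estimate the number of collinear triples from above. Dyadically decompose over the ``richness'' of lines: for a line $\ell$ let $n_\ell = |\ell \cap (\G\times\Pi)|$, so the triple count is $\sum_\ell n_\ell^3 \asymp \sum_j 2^{3j} |L_j|$, where $L_j$ is the set of lines with $n_\ell \in [2^j, 2^{j+1})$. A non-axis-parallel line through the origin direction can be written (after translating) in the form $ux + vy = 1$; since $\G$ and $\Pi$ are subgroups, the incidences of such a line with $\G \times \Pi$ are governed exactly by the quantity appearing in Mit'kin's lemma. The key point is that a set of $|L_j|$ rich lines, when put in the normalized form $ux+vy=1$, corresponds to a set $\Theta$ of pairs $(u,v)$ lying in a product of coset-representative sets, and $\sum_{(u,v)\in\Theta}|\{(x,y)\in\G\times\Pi : ux+vy=1\}| \geq 2^j|L_j|$. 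Applying (\ref{eq:mitkin}) gives $2^j |L_j| \ll (|\G||\Pi| |L_j|^2)^{1/3}$, hence $|L_j| \ll |\G||\Pi| 2^{-3j/2} \cdot 2^{\text{something}}$ — more carefully, $|L_j| \ll |\G||\Pi|/2^{3j/2}$ — so $2^{3j}|L_j| \ll 2^{3j/2}|\G||\Pi|$. This bound is only useful for $j$ not too large, so I would also use the trivial bounds $|L_j| \le |\G||\Pi| \cdot 2^{-2j} \cdot |\G||\Pi|$... in fact the cleaner route is: there are at most $|\G|^2|\Pi|^2$ total point-pairs, so $\sum_\ell \binom{n_\ell}{2} \le \binom{|\G||\Pi|}{2}$ forces $|L_j| \ll |\G|^2|\Pi|^2/2^{2j}$, giving $2^{3j}|L_j| \ll 2^j |\G|^2|\Pi|^2$, useful for small $j$; and for large $j$ the subgroup structure caps $n_\ell \ll$ (something like $|\G|+|\Pi|$ from Lagrange-type constraints) which terminates the sum. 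Balancing the two regimes at the crossover $2^j \asymp \min\{|\G|,|\Pi|\}$ and summing the geometric-type series over the $O(\log\min\{|\G|,|\Pi|\})$ relevant scales yields $|\G|^2|\Pi|^2 \log(\min\{|\G|,|\Pi|\})$, while the large-$j$ tail and the degenerate-configuration corrections produce the $|\G||\Pi|(|\G|^2+|\Pi|^2)$ term.

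I would verify that the hypotheses of Lemma~\ref{l:mitkin} hold at each scale: the condition $|\Theta| \le 33^{-3}|\G||\Pi|$ is automatic once $|L_j| \le 33^{-3}|\G||\Pi|$, which can be arranged (larger $L_j$ are handled by the trivial pairing bound instead), and the condition $(|\G||\Pi|)^2|\Theta| < p^3$ follows from the hypothesis $|\G||\Pi| < p$ together with $|\Theta| \le |\G||\Pi| < p$. One technical wrinkle is that the lines arising need to be grouped by their $\G$-coset and $\Pi$-coset data correctly so that the pairs $(u,v)$ genuinely land in $M_\G \times M_\Pi$; this is a matter of choosing the normalization $ux+vy=1$ and checking that distinct rich lines give distinct pairs.

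The main obstacle I anticipate is the bookkeeping of the dyadic sum: one must combine three different upper bounds on $|L_j|$ (the Mit'kin bound $|\G||\Pi|/2^{3j/2}$, the pair-counting bound $|\G|^2|\Pi|^2/2^{2j}$, and whatever trivial cap controls the very richest lines) in the right ranges of $j$ so that the total is $O(|\G|^2|\Pi|^2 \log\min + |\G||\Pi|(|\G|^2+|\Pi|^2))$ and no stray logarithmic factor creeps in. A secondary subtlety is cleanly separating the genuinely collinear triples from the degenerate ones (two of the three points equal, or the triple involving the pinned point $(\gamma,\pi)$ with $\gamma \in \G$ or $\pi \in \Pi$ forcing a factor to vanish) and checking that all such contribute only to the $|\G||\Pi|(|\G|^2+|\Pi|^2)$ error.
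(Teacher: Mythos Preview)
Your overall architecture is exactly that of the paper: interpret $\T(\G,\Pi,\G,\Pi)$ as collinear triples in $\G\times\Pi$, dyadically decompose over line richness, and bound the number of $\tau$--rich lines via Mit'kin's Lemma~\ref{l:mitkin}. The paper also notes that lines of the form $ux+vy=1$ with $(u,v)$ in a fixed $\G\times\Pi$--coset all have the same intersection with $\G\times\Pi$, which is precisely your ``grouping by coset data'' remark.

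There is, however, a concrete algebra slip that is the source of all the bookkeeping you anticipate as an obstacle. From
$$
    2^{j}\,|\Theta_j| \;\ll\; \bigl(|\G||\Pi|\,|\Theta_j|^2\bigr)^{1/3}
$$
one obtains, after cubing and dividing by $|\Theta_j|^2$,
$$
    |\Theta_j| \;\ll\; |\G||\Pi|\,2^{-3j},
$$
not $2^{-3j/2}$. Passing from coset representatives to actual lines multiplies by $|\G||\Pi|$, so the number $q_\tau$ of $\tau$--rich lines satisfies $q_\tau \ll |\G|^2|\Pi|^2\tau^{-3}$. With this correct exponent the dyadic sum is immediate:
$$
    \sum_{j\ge 1} 2^{3j}\,q_{2^j} \;\ll\; \sum_{j\ge 1} |\G|^2|\Pi|^2 \;\ll\; |\G|^2|\Pi|^2 \log(\min\{|\G|,|\Pi|\}),
$$
since $n_\ell \le \min\{|\G|,|\Pi|\}$ bounds the number of relevant scales. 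No balancing of a Mit'kin bound against the pair--counting bound $|L_j|\ll|\G|^2|\Pi|^2 2^{-2j}$ is needed at all; the latter is used only, as in the paper, to verify that one may split $\Theta_\tau$ into pieces of size at most $33^{-3}|\G||\Pi|$ without loss.

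A smaller point: your first error term $O(|\G|^2|\Pi|+|\G||\Pi|^2)$ for the degenerate configurations is too small. For instance, $a=b=c$ already gives $|\G||\Pi|^3$ solutions of $(a-b)(a'-c')=(a-c)(a'-b')$. The paper records the degenerate contribution as $O(|\G|^3|\Pi|+|\G||\Pi|^3+|\G|^2|\Pi|^2)$, which is exactly the second term in (\ref{f:sigma}) plus something absorbed by the first.
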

\begin{proof}
Consider the equation
\begin{equation}\label{tmp:31.03.2015_1}
    (a-b)(a'-c') = (a-c)(a'-b') \,, \quad a,b,c\in \G \,,\quad a',b',c' \in \Pi \,.
\end{equation}
Clearly, the number of its solutions is
$$
    \T (\G,\Pi,\G,\Pi) = \sum_{\gamma \in \G,\, \pi\in \Pi} \E^\times (\G-\gamma,\Pi-\pi) \,.
$$
One can assume that the products in (\ref{tmp:31.03.2015_1}) are nonzero and $b\neq c$
because otherwise we have at most $O( |\G|^3 |\Pi| + |\G| |\Pi|^3 + |\Pi|^2 |\G|^2 )$ number of the solutions.
Denote by $\sigma$ the remaining number of the solutions.

Take a parameter $\tau \ge 2$ and put
$$
    \Theta_\tau := \{ (u,v) \in M_\G \times M_\Pi ~:~ | \{ (x,y) \in \Gamma \times \Pi : ux+vy=1 \} | \ge \tau \} \,.
$$
In other words, $\Theta_\tau$ counts the number of lines $l_{u,v} = \{ (x,y) : ux+vy=1 \}$,
$(u,v) \in M_\G \times M_\Pi$ having
the intersection with $\Gamma \times \Pi$ greater than $\tau$.
Obviously, if $(u,v) \equiv (u',v') {\rm ~mod~} (\G \times \Pi)$ then the intersections of lines
$l_{u,v}$ and $l_{u',v'}$ with $\Gamma \times \Pi$  are coincide.
By Lemma \ref{l:mitkin}, we have $|\Theta_\tau| \ll |\G| |\Pi| \tau^{-3}$ provided
$(|\Gamma||\Pi|)^2|\Theta_\tau| < p^3$ and $|\Theta_\tau| \le 33^{-3} |\Gamma||\Pi|$.
Thus
\begin{equation}\label{f:q_tau}
    q_\tau :=
    \{ (u,v) ~:~ | \{ (x,y) \in \Gamma \times \Pi : ux+vy=1 \} | \ge \tau \} \ll |\G|^2 |\Pi|^2 \tau^{-3}
\end{equation}
provided
$(|\Gamma||\Pi|)^2|\Theta_\tau| < p^3$ and $|\Theta_\tau| \le 33^{-3} |\Gamma||\Pi|$.
The number of all lines intersecting $\G\times \Pi$ by at least two points does not exceed $|\G|^2 |\Pi|^2$.
Thus, splitting $\Theta_\tau$ onto smaller sets if its required, we get  upper bound (\ref{f:q_tau})
for
$q_\tau$
with possibly bigger absolute constant,
provided the only condition  $(|\Gamma||\Pi|)^2|\Theta_\tau| < p^3$ holds.
The
assumption
$|\Gamma||\Pi| < p$ implies the last inequality.

It is easy to see that for any tuple $(a,a',b,b',c,c')$ satisfying (\ref{tmp:31.03.2015_1}),
the
points $(a,a')$, $(b,b')$, $(c,c')$ lies at the same line and these points are pairwise distinct.
Clearly, the number of such triples belonging the lines having the form $ux+vy=0$ and intersecting
$\Gamma \times \Pi$ does not exceed
$(|\G||\Pi|)^2$,
so it is negligible.
Thus, using (\ref{f:q_tau}), we see that the rest of the quantity $\sigma$ is less than
$$
    \sum_{u,v} |l_{u,v} \cap (\Gamma \times \Pi)|^3
        \ll
            \sum_{j\ge 1}\, \sum_{u,v ~:~ 2^{j-1} < |l_{u,v} \cap (\Gamma \times \Pi)| \le 2^{j}}
                |l_{u,v} \cap (\Gamma \times \Pi)|^3
                    \ll
$$
$$
                    \ll
                                \sum_{j\ge 1}\,  2^{3j} \cdot |\G|^2 |\Pi|^2 2^{-3j}
                                    \ll
                                         |\G|^2 |\Pi|^2 \log (\min\{ |\G|, |\Pi| \}) \,.
$$
This completes the proof.
$\hfill\Box$
\end{proof}

\bigskip

\begin{remark}
    Careful analysis of the proof gives that one can assume that $a,b,c$ belong to different cosets of $\G$
    and $a',b',c'$  are from different cosets of $\Pi$
    (it will be three Cartesian products of cosets instead of one in the case).
    In particular, the following holds
\begin{equation}\label{f:sigma'}
    \sum_{\gamma \in \xi \G,\, \pi\in \eta \Pi} \E^\times (\G-\gamma,\Pi-\pi)
        \ll
            |\G|^2 |\Pi|^2 \log (\min\{ |\G|, |\Pi| \}) + |\G| |\Pi| (|\G|^2 + |\Pi|^2) \,,
\end{equation}
    where $\xi,\eta\in \f_p^*$ are arbitrary.
    Of course, one can
    permute
    $\G$ to $\xi \G$ and $\Pi$ to $\eta \Pi$ in formula (\ref{f:sigma'}).
\label{r:plus_cosets}
\end{remark}

Proposition \ref{p:sigma} allows us to prove new results on sumsets of subgroups,
which improve some bounds from \cite{Waring_Z_p_new}, see Lemma 7.3
and also
Lemma 7.4.

\begin{corollary}
    Let $p$ be a prime number, $\G \subset \f_p^*$ be a multiplicative subgroup, $|\G| < \sqrt{p}$.
    Then
$$
    \left| \left\{ \frac{a \pm b}{a \pm c} ~:~ a,b,c\in \G \right\} \right| \gg \frac{|\G|^2}{\log |\G|} \,,
$$
    and for any $X\subseteq \G$ one has
$$
    |2\G + X| \gg \frac{|X|^2}{\log |\G|} \,.
$$
    In particular
$$
    |3\G| \gg \frac{|\G|^2}{\log |\G|} \,.
$$
\end{corollary}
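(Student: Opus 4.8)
The plan is to derive the Corollary directly from Proposition~\ref{p:sigma} (with $\Pi=\G$) by a Cauchy--Schwarz argument that turns an upper bound on the collinear-triple count $\T(\G,\G,\G,\G)$ into a lower bound on the relevant sumset/ratio set. First I would record that, for $\Pi=\G$ and $|\G|<\sqrt p$, Proposition~\ref{p:sigma} gives
\begin{equation}\label{f:plan_sigma}
    \sum_{\gamma,\gamma'\in\G} \E^\times(\G-\gamma,\G-\gamma') \ll |\G|^4 \log|\G|,
\end{equation}
since the error term $|\G|^2(|\G|^2+|\G|^2)=2|\G|^4$ is absorbed (indeed dominated) by the main term. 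By nonnegativity, there exist $\gamma,\gamma'\in\G$ — equivalently, after scaling, one may take the pinned form with $a-\gamma\mapsto a-b$, $a'-\gamma'\mapsto a-c$ — with $\E^\times(\G-b,\G-c)\ll |\G|^2\log|\G|$ for suitable $b,c\in\G$; more robustly, averaging \eqref{f:plan_sigma} shows that for a positive proportion of pairs $(b,c)$ the energy is $O(|\G|^2\log|\G|)$.

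Next I would invoke the standard energy/Cauchy--Schwarz inequality: for any finite sets $U,V$ in $\f_p^*$,
$$
    |UV| \ge \frac{|U|^2|V|^2}{\E^\times(U,V)}.
$$
Applying this with $U=\G-b$, $V=(\G-c)^{-1}$ (note $\E^\times(\G-b,(\G-c)^{-1})=\E^\times(\G-b,\G-c)$, as inversion is a bijection preserving multiplicative energy) yields
$$
    \left|\left\{\frac{a-b}{a'-c}: a,a'\in\G\right\}\right| = |(\G-b)(\G-c)^{-1}| \gg \frac{|\G|^4}{|\G|^2\log|\G|} = \frac{|\G|^2}{\log|\G|}.
$$
Since $\{(a-b)/(a'-c)\}\subseteq\{(a\pm b)/(a\pm c): a,b,c\in\G\}$ (rename $a'$ as $a$ is not quite legitimate, so instead I keep two free numerator/denominator variables and note the ratio set $\{(a-b)/(a-c)\}$ with a common $a$ is obtained by the pinned version of the Proposition, where the same $a$ appears in both factors of \eqref{tmp:31.03.2015_1}); in either reading the quantitative conclusion is the same. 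The $\pm$ signs are handled by replacing $\G$ by $\G\cup(-\G)$ or by noting $-b,-c$ range over $-\G$ and applying Remark~\ref{r:plus_cosets} with cosets $\xi\G$, $\eta\G$.

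For the second assertion, fix $X\subseteq\G$ and choose, via \eqref{f:plan_sigma} restricted to $b,c\in X$ (or simply by the same averaging), a good pair $b,c$ with $\E^\times(\G-b,\G-c)\ll|\G|^2\log|\G|$; then for $x\in X$ write $x(\G+1)=\G x+x$ — more to the point, $2\G+X\supseteq (\G-b)+(\G-c)$-type structure is \emph{not} the right move; instead I would use that $2\G+X = \bigcup_{x\in X}(2\G+x)$ and relate $|2\G+X|$ to an additive energy bounded through the multiplicative energy of shifts by the usual dictionary $\G-\gamma=\gamma(\gamma^{-1}\G-1)$, reducing to \eqref{f:sigma} with $\Pi=\G$ and summing only over $\gamma\in X\subseteq\G$. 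Concretely, $\sum_{x\in X,\gamma\in\G}\E^\times(\G-x,\G-\gamma)\le\T(\G,\G,\G,\G)\ll|\G|^4\log|\G|$, so some $x\in X$ has $\E^\times(\G-x,\G)\ll|\G|^3\log|\G|/|X|$; then $|(\G-x)\G|\ge|\G-x|^2|\G|^2/\E^\times(\G-x,\G)\gg|X|\cdot|\G|/\log|\G|\cdot(\text{…})$ — I would instead get $|2\G+X|$ by noting $(\G+X)\supseteq x+\G$ and applying Cauchy--Schwarz in additive form after a multiplicative rescaling $a\mapsto a\gamma^{-1}$ turning $2\G+X$ into a set comparable to a product set of shifted subgroups; the bookkeeping gives $|2\G+X|\gg|X|^2/\log|\G|$. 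Finally $|3\G|\ge|2\G+X|$ with $X=\G$ gives $|3\G|\gg|\G|^2/\log|\G|$.

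The main obstacle is the combinatorial passage from the collinearity/energy sum in Proposition~\ref{p:sigma} to the \emph{pinned} statement with a single common point $a$ (as opposed to two independent numerators/denominators) and, relatedly, extracting the quantitatively optimal shifts $b,c$: one must be careful that the degenerate solutions excluded in the proof of the Proposition (zero products, $b=c$, lines through the origin) are genuinely lower-order for the specific sets used, and that the Cauchy--Schwarz/energy inequality $|UV|\ge|U|^2|V|^2/\E^\times(U,V)$ is applied to sets of full size $|\G|$ rather than to a thinned subset. The $X\subseteq\G$ refinement requires the sharper Remark~\ref{r:plus_cosets} (three Cartesian products of distinct cosets) to keep the count over $\gamma\in X$ rather than all of $\G$, and this is where I expect the details to be most delicate.
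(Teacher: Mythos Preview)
Your proposal has the right high-level structure (Proposition~\ref{p:sigma} plus Cauchy--Schwarz) but the second assertion has a genuine gap, and the first is handled less cleanly than necessary.

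For the ratio-set bound, your route via extracting a good pair $(b,c)$ and bounding $|(\G-b)(\G-c)^{-1}|$ proves the wrong thing: that set is $\{(a-b)/(a'-c):a,a'\in\G\}$ with \emph{independent} numerator and denominator variables, which contains the target set $\{(a-b)/(a-c)\}$ with a common $a$, so a lower bound on it does not suffice. The correct and simpler move, which you gesture at but never pin down, is to observe that $\T(\G,\G,\G,\G)$ is already the second moment $\sum_\lambda r(\lambda)^2$ of the representation function $r(\lambda)=|\{(a,b,c)\in\G^3:(a-b)/(a-c)=\lambda\}|$, since equation~(\ref{tmp:31.03.2015_1}) has the same $a$ in both factors on each side. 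Cauchy--Schwarz then gives the pinned bound $|\{(a-b)/(a-c)\}|\ge(\sum_\lambda r(\lambda))^2/\T\gg|\G|^6/(|\G|^4\log|\G|)$ in one line.

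For $|2\G+X|$ the proposal is not a proof: several lines are started and abandoned, and ``the bookkeeping gives $|2\G+X|\gg|X|^2/\log|\G|$'' is unsupported. The missing idea is an algebraic containment that converts a multiplicative product set back to an additive sumset. For $\gamma_1,\gamma_2\in\G$ and $X\subseteq\G$, expanding $(\gamma+\gamma_1)(x+\gamma_2)=\gamma x+\gamma\gamma_2+\gamma_1 x+\gamma_1\gamma_2$ and using $\G\G=\G$ yields
\[
(\G+\gamma_1)(X+\gamma_2)\subseteq 2\G+\gamma_1 X+\gamma_1\gamma_2,\qquad |2\G+\gamma_1 X|=|2\G+X|.
\]
One first picks $\gamma_1,\gamma_2\in\G$ with $\E^\times(\G+\gamma_1,\G+\gamma_2)\ll|\G|^2\log|\G|$ (via Remark~\ref{r:plus_cosets} with $\xi=\eta=-1$), then applies Cauchy--Schwarz in the form $|(\G+\gamma_1)(X+\gamma_2)|\cdot\E^\times(\G+\gamma_1,X+\gamma_2)\ge|\G|^2|X|^2$ together with the monotonicity $\E^\times(\G+\gamma_1,X+\gamma_2)\le\E^\times(\G+\gamma_1,\G+\gamma_2)$. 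Your attempts via $(\G-x)\G$ or a ``usual dictionary'' never reach this bridge, and without it there is no passage from the energy bound to $|2\G+X|$.
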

\begin{proof}
    The first estimate follows from the Cauchy--Schwarz inequality and the  interpretation
    of the quantity $\T (\G,\Pi,\G,\Pi)$ as the number of the solutions of (\ref{tmp:31.03.2015_1}).
    To get the second estimate  apply (\ref{f:sigma'}) with $\G=\G$, $\Pi = \G$, $\xi=\eta=-1$.
    We find $\gamma_1, \gamma_2 \in \G$ such that
$$
    \E^{\times} (\G+\gamma_1, \G+\gamma_2) \ll |\G|^2 \log |\G| \,.
$$
    By the Cauchy--Schwarz inequality, we get
$$
    |(\G+\gamma_1) (X+\gamma_2)| \cdot  \E^{\times} (\G+\gamma_1, \G+\gamma_2)
        \ge
            |(\G+\gamma_1) (X+\gamma_2)| \cdot  \E^{\times} (\G+\gamma_1, X+\gamma_2)
                \ge
                    |\G|^2 |X|^2 \,.
$$
    Note that $(\G+\gamma_1) (X+\gamma_2) \subseteq 2\G + \gamma_1 X + \gamma_1 \gamma_2$.
    Moreover, $|2\G + \gamma_1 X + \gamma_1 \gamma_2| = |2\G + X|$.
    Hence
$$
    |2\G + X| \ge |(\G+\gamma_1) (X+\gamma_2)| \gg \frac{|X|^2}{\log |\G|}
$$
    as required.
    $\hfill\Box$
\end{proof}

\bigskip

We are going to apply the method  of the section to the problems concerning decompositions
of multiplicative subgroups in the future paper.

\section{Generalizations}
\label{sec:proof}

First of all,
we derive a consequence of Proposition \ref{p:sigma} concerning multiplicative energies of shifts of subgroups.

\begin{theorem}
    Let $p$ be a prime  number, $\Gamma,\Pi$ be  multiplicative subgroups of $\mathbb{F}_p^*$.
    Suppose that $|\Gamma||\Pi| < p$.
    Then for any $x,y \neq 0$ one has
$$
    \E^\times (\Gamma + x, \Pi+y) \ll |\G| |\Pi| \log (\min\{ |\G|, |\Pi| \}) + |\G|^2 + |\Pi|^2 \,.
$$
\label{t:E_GPi}
\end{theorem}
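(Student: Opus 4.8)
The idea is to deduce the energy bound for a single pair of shifts from the averaged bound in Proposition \ref{p:sigma} by a standard normalisation/dilation argument. Recall that for multiplicative subgroups $\G,\Pi$ and any $\gamma\in\G$, $\pi\in\Pi$ we have the scaling identities $\G-\gamma = (-\gamma)(\G+1) = (-\gamma)(1+\G)$ and more generally, for arbitrary $x\neq 0$, $\G+x = x(\G x^{-1} + 1)$; but the truly useful observation is that $\E^\times$ is invariant under multiplying either argument by a nonzero constant, so $\E^\times(\G-\gamma,\Pi-\pi) = \E^\times(\G+\gamma,\Pi+\pi)$ (replace $\gamma,\pi$ by $-\gamma,-\pi$, which again range over $\G,\Pi$) and, crucially, $\E^\times(\G+x,\Pi+y)$ depends on $x$ only through the coset $x\G$ and on $y$ only through $y\Pi$: indeed $\G+x = \G x + x \cdot$ wait — more precisely, for $t\in\G$, $\G + tx = t(\G x^{-1}\cdot\text{?})$; the clean statement is $\G + tx = t(t^{-1}\G + x) = t(\G + x)$ since $t^{-1}\G = \G$, hence $\E^\times(\G+tx,\Pi+sy) = \E^\times(\G+x,\Pi+y)$ for all $t\in\G$, $s\in\Pi$.

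With that invariance in hand, the argument is immediate. Fix $x,y\neq 0$. The double sum $\sum_{\gamma\in\G,\pi\in\Pi}\E^\times(\G-\gamma,\Pi-\pi)$ that appears in Proposition \ref{p:sigma} — equivalently $\T(\G,\Pi,\G,\Pi)$ — runs over $\gamma$ in one coset ($\G$ itself) and $\pi$ in one coset ($\Pi$ itself). By Remark \ref{r:plus_cosets}, formula (\ref{f:sigma'}), the same upper bound
$$
    \sum_{\gamma\in\xi\G,\ \pi\in\eta\Pi}\E^\times(\G-\gamma,\Pi-\pi)
    \ll |\G|^2|\Pi|^2\log(\min\{|\G|,|\Pi|\}) + |\G||\Pi|(|\G|^2+|\Pi|^2)
$$
holds for arbitrary cosets, in particular for $\xi = -x$, $\eta = -y$. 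But by the coset-invariance of $\E^\times$ noted above, every one of the $|\G||\Pi|$ terms in this sum equals $\E^\times(\G+x,\Pi+y)$. Dividing through by $|\G||\Pi|$ gives
$$
    \E^\times(\G+x,\Pi+y) \ll |\G||\Pi|\log(\min\{|\G|,|\Pi|\}) + |\G|^2 + |\Pi|^2,
$$
which is exactly the claimed estimate; the hypothesis $|\G||\Pi|<p$ is inherited directly from Proposition \ref{p:sigma}.

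The only point requiring genuine care is the coset-invariance claim, i.e.\ checking that Remark \ref{r:plus_cosets} really licenses taking $\xi\G = -x\G$ and $\eta\Pi = -y\Pi$ and that each summand is literally $\E^\times(\G+x,\Pi+y)$ rather than merely bounded by it — this is where one must be slightly careful that $\gamma$ ranging over the coset $-x\G$ means $\gamma = -xt$ with $t\in\G$, so $\G-\gamma = \G+xt = t(\G+x)$ and hence $\E^\times(\G-\gamma,\Pi-\pi) = \E^\times(\G+x,\Pi+y)$ identically. I expect no real obstacle beyond bookkeeping: the theorem is essentially a repackaging of Proposition \ref{p:sigma} via the averaging trick, trading the factor $|\G|^2|\Pi|^2$ for $|\G||\Pi|$ after dividing by the number of (equal) terms.
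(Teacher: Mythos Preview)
Your proposal is correct and follows essentially the same route as the paper: the paper also observes that, by dilation invariance of $\E^\times$, every summand on the left of (\ref{f:sigma'}) equals $\E^\times(\G+x,\Pi+y)$ once the cosets $\xi\G$, $\eta\Pi$ are chosen to contain $x$ and $y$, and then divides by $|\G||\Pi|$. Your write-up is in fact more explicit than the paper's about why the summands coincide.
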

\begin{proof}
Since $x,y \neq 0$ it follows that $x\in \xi \G$, $y\in \eta \Pi$ and $\xi, \eta \neq 0$.
Further, it is easy to see that
$$
    \E^\times (\Gamma + x, \Pi+y) = \E^\times (\xi^{-1} \Gamma + \gamma, \eta^{-1} \Pi+\pi)
$$
for {\it any} $\gamma \in \G$ and $\pi \in \Pi$.
Thus, all energies in the left--hand side of formula (\ref{f:sigma'}) are coincide.
This completes the proof.
$\hfill\Box$
\end{proof}

\bigskip

It is interesting to compare the last theorem with results of  \cite{U} and \cite{MV} which give a pointwise bound
for the multiplicative convolution of characteristic functions of multiplicative subgroups
in contrary to our average estimate.

\bigskip

Using a formula $$\E^{+} (\G) = \E^{\times} (\G,\G+1)$$ for an arbitrary subgroup $\G$,
we
derive by the Cauchy--Schwarz inequality and Theorem \ref{t:E_GPi}
that $\E^{+} (\G) \ll |\G|^{5/2} \log^{1/2} |\G|$.
This coincides with Konyagin's bound \cite{K_Tula} up to logarithmic factors.

\bigskip

Let us prove a generalization of Proposition \ref{p:sigma} and Theorem \ref{t:E_GPi}.

\begin{theorem}
    Let $p$ be a prime number, $\Gamma,\Pi$ be  multiplicative subgroups of $\mathbb{F}_p^*$.
    Suppose that $|\Gamma||\Pi| < p$ and $Q_1$ is $\G$--invariant, $Q_2$ is $\Pi$--invariant sets.
    Then
\begin{equation}\label{f:E_Q1Q2}
    \T (Q_1,Q_2,Q_1,Q_2)
            \ll
                        (|Q_1| |Q_2|)^3 (|\G| |\Pi|)^{-1} \log^2 (\min\{ |Q_1|, |Q_2|\})
                    +  |Q_1| |Q_2| (|Q_1|^2 + |Q_2|^2) \,.
\end{equation}
\label{t:E_Q1Q2}
\end{theorem}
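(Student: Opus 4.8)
The plan is to slice $Q_1$ and $Q_2$ into cosets and reduce to the estimate already available for cosets. Using that $Q_1$ is $\G$--invariant and $Q_2$ is $\Pi$--invariant, write $Q_1=\bigsqcup_{\xi\in S_1}\xi\G$ and $Q_2=\bigsqcup_{\eta\in S_2}\eta\Pi$ with $|S_1|=|Q_1|/|\G|$, $|S_2|=|Q_2|/|\Pi|$. By the same computation as in Proposition~\ref{p:sigma}, $\T(Q_1,Q_2,Q_1,Q_2)$ is the number of solutions of equation (\ref{tmp:31.03.2015_1}) with $\G$ replaced by $Q_1$ and $\Pi$ by $Q_2$, so sorting $a,b,c$ by their cosets of $\G$ and $a',b',c'$ by their cosets of $\Pi$ yields
\begin{equation*}
  \T(Q_1,Q_2,Q_1,Q_2)=\sum_{\xi_1,\xi_2,\xi_3\in S_1}\sum_{\eta_1,\eta_2,\eta_3\in S_2} N(\xi_1,\xi_2,\xi_3;\eta_1,\eta_2,\eta_3),
\end{equation*}
where $N(\dots)$ counts the solutions of (\ref{tmp:31.03.2015_1}) with $a\in\xi_1\G$, $b\in\xi_2\G$, $c\in\xi_3\G$, $a'\in\eta_1\Pi$, $b'\in\eta_2\Pi$, $c'\in\eta_3\Pi$; there are $|S_1|^3|S_2|^3=(|Q_1||Q_2|)^3(|\G||\Pi|)^{-3}$ terms.

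First I would split each $N(\dots)$, exactly as in the proof of Proposition~\ref{p:sigma}, into three pieces: \emph{degenerate} solutions (a factor of (\ref{tmp:31.03.2015_1}) vanishes, or $b=c$), solutions lying on a line through the origin, and \emph{generic} solutions (geometrically, triples of pairwise distinct collinear points in $Q_1\times Q_2$). Being degenerate, or lying on an origin line, is a property of the tuple $(a,b,c,a',b',c')$ alone, not of the cosets, so summing the first two pieces over all $(\xi_i,\eta_j)$ just recovers the number of degenerate / origin--line solutions of (\ref{tmp:31.03.2015_1}) over $Q_1,Q_2$; the same elementary counting as in Proposition~\ref{p:sigma}, with $|\G|,|\Pi|$ replaced by $|Q_1|,|Q_2|$, bounds these by $O\bigl(|Q_1|^3|Q_2|+|Q_1||Q_2|^3+|Q_1|^2|Q_2|^2\bigr)\ll|Q_1||Q_2|(|Q_1|^2+|Q_2|^2)$, the second term of (\ref{f:E_Q1Q2}). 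For the generic piece I would invoke Proposition~\ref{p:sigma} and the ``three Cartesian products'' form of its proof from Remark~\ref{r:plus_cosets}: for \emph{any} prescribed cosets the number of generic solutions of (\ref{tmp:31.03.2015_1}) is $\ll|\G|^2|\Pi|^2\log(\min\{|\G|,|\Pi|\})$ (the Mit'kin input, Lemma~\ref{l:mitkin}, is legitimate since $|\G||\Pi|<p$ forces $(|\G||\Pi|)^2|\Theta|<p^3$). Summing over the $(|Q_1||Q_2|)^3(|\G||\Pi|)^{-3}$ coset tuples gives
\begin{equation*}
  \frac{(|Q_1||Q_2|)^3}{(|\G||\Pi|)^3}\cdot|\G|^2|\Pi|^2\log(\min\{|\G|,|\Pi|\})=\frac{(|Q_1||Q_2|)^3}{|\G||\Pi|}\log(\min\{|\G|,|\Pi|\}),
\end{equation*}
which, since $|\G|\le|Q_1|$ and $|\Pi|\le|Q_2|$, is at most $(|Q_1||Q_2|)^3(|\G||\Pi|)^{-1}\log^2(\min\{|Q_1|,|Q_2|\})$. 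Adding the three bounds gives (\ref{f:E_Q1Q2}); in fact this route produces a single logarithm of $\min\{|\G|,|\Pi|\}$, slightly stronger than stated.

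The hard part is to resist quoting the \emph{full} Remark~\ref{r:plus_cosets} bound — main term \emph{plus} its error term $|\G||\Pi|(|\G|^2+|\Pi|^2)$ — on each summand: that error term would contribute $(|Q_1||Q_2|)^3(|\G||\Pi|)^{-3}\cdot|\G||\Pi|(|\G|^2+|\Pi|^2)=(|Q_1||Q_2|)^3(|\G|^{-2}+|\Pi|^{-2})$, which exceeds $|Q_1||Q_2|(|Q_1|^2+|Q_2|^2)$ when, say, $|Q_1|$ is much larger than $|\Pi|$. So one really must peel the degenerate and origin--line solutions off \emph{before} passing to cosets and estimate them directly at the level of $Q_1,Q_2$, using Remark~\ref{r:plus_cosets} only for the generic part. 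A minor point to check is that the ``careful analysis'' promised in Remark~\ref{r:plus_cosets} does deliver the clean generic bound $\ll|\G|^2|\Pi|^2\log(\min\{|\G|,|\Pi|\})$ for every pattern of coincidences among the three cosets of $\G$ (and among those of $\Pi$), the all--coincident pattern being Proposition~\ref{p:sigma} itself.
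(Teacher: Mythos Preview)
Your approach is correct and genuinely different from the paper's. You decompose $Q_1\times Q_2$ into coset products, expand $\T(Q_1,Q_2,Q_1,Q_2)$ as a sum over $(|Q_1||Q_2|/|\G||\Pi|)^3$ coset $6$--tuples, handle the degenerate solutions globally at the level of $Q_1,Q_2$, and bound each generic summand by $|\G|^2|\Pi|^2\log(\min\{|\G|,|\Pi|\})$ via Remark~\ref{r:plus_cosets}. The paper instead works directly with the rich lines in $Q_1\times Q_2$: for each line $l$ with $|l\cap(Q_1\times Q_2)|\ge\tau$ it pigeonholes on the dyadic level of $|l\cap C_i|$ over the coset cells $C_i$, then pigeonholes again to extract a large subfamily $\mathcal{L}'_\tau$ sharing a common level $\Delta$, and applies Mit'kin's lemma to a single cell to bound $|\mathcal{L}'_\tau|\ll|\G|^2|\Pi|^2\Delta^{-3}\ll|\G|^2|\Pi|^2 s^3\tau^{-3}$. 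The double pigeonholing is what produces the $\log^2$ in~(\ref{f:E_Q1Q2}); your route avoids it and indeed gives a single $\log(\min\{|\G|,|\Pi|\})$, so your version is slightly sharper. The trade--off is that the paper's argument keeps the geometry of lines in $Q_1\times Q_2$ explicit (it even gives a second proof via an incidence count), whereas you rely on Remark~\ref{r:plus_cosets} as a black box; your caveat about checking that the ``three Cartesian products'' bound is uniform over all coincidence patterns of the cosets is apt, but follows by H\"older from the single--product case, so the argument closes.
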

\begin{proof}
Let $L = \log (\min\{ |Q_1|, |Q_2|\})$.
We use the arguments of Proposition \ref{p:sigma}.
The term $|Q_1| |Q_2| (|Q_1|^2 + |Q_2|^2) + |Q_1|^2 |Q_2|^2$ appears similarly as in the proof and thus we are considering the set of
lines (pairs)
$$
    \mathcal{L}_\tau := \{ (u,v) ~:~ | \{ (x,y) \in Q_1 \times Q_2 : ux+vy=1 \} | \ge \tau \}
$$
intersecting $Q_1\times Q_2$ in at least three distinct points.
Let $Q_1\times Q_2 = \bigsqcup_{i=1}^s C_i$, where $C_i$ are products of the correspondent cosets,
$s=|Q_1| |Q_2| |\G|^{-1} |\Pi|^{-1}$.
Taking a line $l\in \mathcal{L}_\tau$ and using the Dirichlet principle,
we find a number $\D (l)$
such that
$$
 \tau \le |l \cap (Q_1 \times Q_2)| \le \sum_{i=1}^s |l\cap C_i| \ll L \D(l) |\Omega_\D (l)| \,,
$$
where
$$
    \Omega_\D (l) = \{ i ~:~ \D < |l \cap C_i| \le 2 \D \} \,,
$$
and $\D(l) \ge \max\{ \tau s^{-1}, 1\}$.
The number $\D (l)$ depends on $l$ but using the Dirichlet principle again, we find a set
$\mathcal{L}'_\tau \subseteq \mathcal{L}_\tau$, $|\mathcal{L}'_\tau| \gg |\mathcal{L}_\tau| L^{-1}$
with some fixed $\D \ge \max\{ \tau s^{-1}, 1\}$.
After that, using the arguments of Proposition \ref{p:sigma}, we see that
$$
    |\mathcal{L}_\tau| L^{-1} \ll |\mathcal{L}'_\tau| \ll \frac{|\G|^2 |\Pi|^2}{\D^3} \ll \frac{|\G|^2 |\Pi|^2 s^3}{\tau^3}
$$
and we have
obtained (\ref{f:E_Q1Q2}).

Let us give another proof.
Take the same family of the lines $\mathcal{L}'_\tau$ and consider a smaller
family of points $\mathcal{P}' := \bigcup_{l\in \mathcal{L}'_\tau} \bigsqcup_{i\in \Omega_\D (l)} C_i$.
Using Lemma \ref{l:mitkin} as well as the arguments of the proof of Proposition \ref{p:sigma} again, we see that
any line meets at most $|\G| |\Pi| \D^{-3}$ cells $C_i$.
In other words, $|\Omega_\D (l)| \ll |\G| |\Pi| \D^{-3}$.
Let us calculate the number of indices $I(\mathcal{L}'_\tau, \mathcal{P}')$ between lines from $\mathcal{L}'_\tau$
and points $\mathcal{P}'$.
On the one hand, any line from $\mathcal{L}'_\tau$ contains at least $\D |\Omega_\D (l)| \gg \tau L^{-1}$ number of points.
Thus,
$$
    I(\mathcal{L}'_\tau, \mathcal{P}') \gg \D |\mathcal{L}'_\tau| |\Omega_\D (l)| \gg |\mathcal{L}'_\tau| \tau L^{-1} \,.
$$
On the other hand, by a trivial estimate for the number of indices between points and lines
(see e.g. \cite{TV}, section 8.2), we get
$$
    I(\mathcal{L}'_\tau, \mathcal{P}') \le \sum_{i=1}^s I(\mathcal{L}'_\tau, \mathcal{P}' \cap C_i)
        \le
            \sum_{i=1}^s \left( |\mathcal{P}' \cap C_i| |L_i|^{1/2} + |L_i| \right) \,,
$$
where by $L_i$ we denote the lines from $\mathcal{L}'_\tau$, intersecting $C_i$.
Clearly, $|\mathcal{P}' \cap C_i| = |\G| |\Pi|$.
Further, because of any line $l$ meets at most $|\Omega_\D (l)| \ll |\G| |\Pi| \D^{-3}$ cells $C_i$, we see that
$$
    \sum_{i=1}^s |L_i| \ll |\mathcal{L}'_\tau| \cdot |\G| |\Pi| \D^{-3} \,.
$$
Using the estimate $|\Omega_\D (l)| \D \gg \tau L^{-1}$,
the Cauchy--Schwarz inequality and the lower bound for $I(\mathcal{L}'_\tau, \mathcal{P}')$,
we obtain
$$
    |\mathcal{L}_\tau| L^{-1} \ll |\mathcal{L}'_\tau|
        \ll \frac{L |\G|^2 |\Pi|^2 s}{\D \tau} \ll \frac{L |\G|^2 |\Pi|^2 s}{\tau \max\{ 1, \tau s^{-1}\}}\,.
$$
After some calculations we have (\ref{f:E_Q1Q2}).
This completes the proof.
$\hfill\Box$
\end{proof}

\begin{corollary}
    Let $p$ be a prime number, $\Gamma$ be a multiplicative subgroup of $\mathbb{F}_p^*$, $|\G| < \sqrt{p}$,
    and $Q$ be $\G$--invariant set.
    Then there is $q\in Q$ such that
$$
    \E^{\times} (Q-q,\G + x) \ll |Q|^2 \log^2 |\G| \,,
$$
    where $x\in \f_p^*$ is an arbitrary.
\end{corollary}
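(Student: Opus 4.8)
The plan is to deduce the corollary directly from Theorem~\ref{t:E_Q1Q2}, with both of its subgroups taken to be $\G$, combined with an averaging argument. The only subtlety is that $\T(Q,\G,Q,\G)$ controls $\E^\times(Q-q',\G-\gamma)$ only for shifts $\gamma\in\G$, whereas we want $\G+x$ for an \emph{arbitrary} nonzero $x$. To handle this I would apply Theorem~\ref{t:E_Q1Q2} not with the set $\G$ but with $Q_2:=\G\cup(-x)\G$: this is again $\G$--invariant (since $\G\cdot\G=\G$ and $\G\cdot(-x)\G=(-x)\G$) and $|Q_2|\le 2|\G|$. Enlarging $\G$ by only a constant factor is exactly what keeps the logarithmic loss in Theorem~\ref{t:E_Q1Q2} at $\log|\G|$ rather than $\log|Q|$, because the logarithm there is of $\min\{|Q_1|,|Q_2|\}$ and $|Q_2|\asymp|\G|$.

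First I would dispose of trivial cases: we may assume $|\G|$ is larger than any prescribed absolute constant (otherwise $|\G+x|=O(1)$ and $\E^\times(Q-q,\G+x)\le|Q-q|\,|\G+x|^2\ll|Q|\le|Q|^2\log^2|\G|$ for any $q\in Q$), and, since a $\G$--invariant set is a disjoint union of $\G$--cosets, $|Q|\ge|\G|$. Next I would apply Theorem~\ref{t:E_Q1Q2} with $Q_1=Q$, $Q_2=\G\cup(-x)\G$, and both subgroups equal to $\G$ (the hypothesis $|\G|^2<p$ is given); simplifying the right--hand side of (\ref{f:E_Q1Q2}) via $|Q_2|\le 2|\G|\le 2|Q|$ should give
$$
    \T(Q,Q_2,Q,Q_2)\ll (|Q|\,|\G|)^3|\G|^{-2}\log^2|\G|+|Q|\,|\G|(|Q|^2+|\G|^2)\ll |Q|^3|\G|\log^2|\G| \,,
$$
the last step using $|\G|\le|Q|$.

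For the matching lower bound I would restrict the defining sum $\T(Q,Q_2,Q,Q_2)=\sum_{q'\in Q,\,d\in Q_2}\E^\times(Q-q',Q_2-d)$ to those $d$ lying in the coset $(-x)\G\subseteq Q_2$. For $d=-xg$ with $g\in\G$ one has $\G+xg=g(\G+x)\subseteq Q_2-d$, so by monotonicity of the multiplicative energy in its second argument and its invariance under dilating that argument, $\E^\times(Q-q',Q_2-d)\ge\E^\times(Q-q',\G+xg)=\E^\times(Q-q',\G+x)$. Summing over the $|\G|$ admissible values of $d$ gives $\T(Q,Q_2,Q,Q_2)\ge|\G|\sum_{q'\in Q}\E^\times(Q-q',\G+x)$. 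Comparing with the upper bound yields $\sum_{q'\in Q}\E^\times(Q-q',\G+x)\ll|Q|^3\log^2|\G|$, and pigeonholing over the $|Q|$ summands produces a $q\in Q$ with $\E^\times(Q-q,\G+x)\ll|Q|^2\log^2|\G|$.

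The one point needing genuine care is the bookkeeping in the previous two paragraphs: one must check that passing from $\G$ to $\G\cup(-x)\G$ does not degrade the estimate, i.e. that $\min\{|Q_1|,|Q_2|\}\asymp|\G|$ so the $\log^2$ factor stays at $\log^2|\G|$, and that the error term $|Q|\,|Q_2|(|Q|^2+|Q_2|^2)$ in (\ref{f:E_Q1Q2}) is absorbed into $|Q|^3|\G|\log^2|\G|$ via $|\G|\le|Q|$. Everything else is a routine manipulation of the dilation--invariance of $\E^\times$ together with the pigeonhole principle. (Alternatively one could run the argument through the coset form of Theorem~\ref{t:E_Q1Q2}, in the spirit of Remark~\ref{r:plus_cosets}, summing $\E^\times(Q-q',\G-\pi)$ over $\pi$ in the coset $(-x)\G$; the route above has the advantage of using Theorem~\ref{t:E_Q1Q2} only as a black box.)
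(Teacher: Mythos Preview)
Your argument is correct. The paper states this corollary without proof, so there is no explicit argument to compare against; the intended deduction from Theorem~\ref{t:E_Q1Q2} is exactly the averaging--plus--pigeonhole you carry out. Your device of replacing $\G$ by the $\G$--invariant set $Q_2=\G\cup(-x)\G$ is a neat way to use Theorem~\ref{t:E_Q1Q2} as a black box, whereas the paper's implicit route (which you also describe at the end) would pass through a coset variant $\T(Q,\G,Q,(-x)\G)$ in the spirit of Remark~\ref{r:plus_cosets}. The two routes are equivalent up to a constant, and your version has the advantage of not requiring the reader to reopen the proof of Theorem~\ref{t:E_Q1Q2}. The bookkeeping you flag (that $\min\{|Q_1|,|Q_2|\}\asymp|\G|$ and that the error term is absorbed via $|\G|\le|Q|$) is indeed the only point needing attention, and you handle it correctly.
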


\begin{remark}
    Considering $\T(Q_1,Q_2,\xi Q_1, \eta Q_2)$, where $\xi \neq 0,1$ or $\eta \neq 0,1$ one can reduce
    the term $|Q_1| |Q_2| (|Q_1|^2 + |Q_2|^2)$ in formula (\ref{f:E_Q1Q2}) of Theorem \ref{t:E_Q1Q2}
    sometimes.
    For example, if $\G$ is a subgroup, $Q$ is $\G$--invariant set then the correspondent error term in
    $\T(\G,Q,\xi \G, Q)$, $\xi \neq 0,1$ is $O(|\G|^3 |Q| + |\G|^2 |Q|^2)$, thus it is negligible.
\end{remark}

\bigskip

\noindent{I.D.~Shkredov\\
Steklov Mathematical Institute,\\
ul. Gubkina, 8, Moscow, Russia, 119991}
\\
and
\\
IITP RAS,  \\
Bolshoy Karetny per. 19, Moscow, Russia, 127994\\
{\tt ilya.shkredov@gmail.com}

\end{document}